\newcommand{\orcid}[1]{\href{https://orcid.org/#1}{\textcolor[HTML]{A6CE39}{\aiicon{googlescholar}}}}
\newtheorem{thm}{Theorem}[section]
\newtheorem*{thmA}{Theorem A}
\newtheorem*{thmB}{Theorem B}
\newtheorem{lem}[thm]{Lemma}
\newtheorem{cor}[thm]{Corollary}
\theoremstyle{definition}
\newtheorem{defn}[thm]{Definition}
\theoremstyle{remark}
\newtheorem{example}[thm]{Example}
\numberwithin{equation}{section}
 \newcommand{\N}{{\mathbb N}}
 \newcommand{\R}{{\mathbb R}}
\newcommand{\sph}{{\mathbb S}}
\numberwithin{equation}{section}
\begin{document}
\title[On the degree of global smoothings for subanalytic sets]{On the degree of global smoothings for subanalytic sets}

\author{Enrico Savi}
\address{Dipartimento di Matematica, Via Sommarive, 14, Università di Trento, 38123 Povo (ITALY)}
\email{enrico.savi@unitn.it}
\thanks{The author is supported by GNSAGA of INDAM} 



\begin{abstract}
In \cite{bier-par} Bierstone and Parusi\'nski proved the existence of global smoothings for closed subanalytic sets, both in an embedded and a non-embedded sense. In particular, in the non-embedded desingularization procedure the authors construct smoothings of (generically) even degree, indeed it is well-known the existence of subanalytic sets which do not admit non-embedded smoothings of (generically) odd degree. In this paper we introduce a natural topological notion of nonbounding equator for subanalytic sets and we prove a criterion to determine whether a closed subanalytic set $X$ only admits global smoothings of even degree along the nonbounding equator. More in detail, we prove that if $X$ has a nonbounding equator $Y$ then every smoothing of $X$ which is a covering on a connected neighborhood $W$ of $Y$ has even degree over $W$.
\end{abstract}

\keywords{Global smoothing, uniformizations, subanalytic sets, semialgebraic sets, real algebraic sets}
\subjclass[2010]{Primary 32B20, 14P15; Secondary 32S45, 14E15, 14P05, 14P10.}
\date{\today}

\maketitle

\section{Introduction}

In \cite{bier-par} Bierstone and Parusi\'nski proved the following two remarkable global smoothing results for closed subanalytic sets. The term `analytic' means `real analytic'. \emph{Let $V$ be an analytic manifold of dimension $n$, and let $X$ be a closed subanalytic subset of $V$ of dimension $k$.}


\begin{thmA}[{\cite[Theorem\,1.1]{bier-par}} Non-embedded global smoothing]
	There exist an analytic mani\-fold $X'$ of pure dimension $k$, a proper analytic mapping $\varphi:X'\to V$, and a smooth open subanalytic subset $U$ of $X$ such that:
	\begin{enumerate}[label=\emph{(\arabic*)}, ref=\emph{(\arabic*)}]
		\item $\varphi(X')\subset X$.
		\item $\dim (X\setminus U)<k$ and $\varphi^{-1}(X\setminus U)$ is a simple normal crossings hypersurface $B'$ of $X'$.
		\item For each connected component $W$ of $U$, $\varphi^{-1}(W)$ is a finite union of subsets open and closed in $\varphi^{-1}(U)$, each mapped isomorphically onto $W$ by $\varphi$.
	\end{enumerate}
\end{thmA}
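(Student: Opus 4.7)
The plan is to reduce Theorem~A to the subanalytic uniformization theorem of Hironaka (as refined by Bierstone--Milman) and then to apply an embedded resolution of singularities to achieve the simple normal crossings condition.

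First I would take $U$ to be the set of points $x \in X$ admitting an open neighborhood $\Omega$ in $V$ such that $X \cap \Omega$ is an analytic submanifold of $V$ of pure dimension $k$. By standard subanalytic stratification, $U$ is open and subanalytic in $X$, its complement $X \setminus U$ is closed subanalytic of dimension strictly less than $k$, and each connected component of $U$ is a subanalytic analytic submanifold of $V$ of pure dimension $k$.

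The core step is the construction of a pure $k$-dimensional analytic manifold $M$ together with a proper analytic map $\pi \colon M \to V$ such that $\pi(M) = X$, i.e.~a non-embedded uniformization of $X$. One proceeds by local desingularization: Hironaka's resolution applied to the analytic Zariski closures of local branches of $X$ produces, on each chart, proper analytic parametrizations of the top-dimensional locus by a smooth analytic manifold, and these local uniformizations can be glued into a global $\pi$ using the properness forced by the ambient analytic structure. Moreover, by passing if necessary to a disjoint union of local sheets, one can arrange that for each connected component $W$ of $U$ the restriction $\pi^{-1}(W) \to W$ is a trivial finite analytic covering, that is, each connected component of $\pi^{-1}(W)$ is mapped isomorphically onto $W$ by $\pi$. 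This is the main obstacle: producing a global proper analytic uniformization with prescribed covering behavior over the smooth locus requires the full machinery of subanalytic desingularization.

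Next, to realize condition~(2) I would apply Hironaka's embedded (principalization) resolution to the closed analytic subset $\pi^{-1}(X \setminus U)$ of $M$. This yields a finite composition of blow-ups $\rho \colon X' \to M$ along smooth analytic centers which is an isomorphism over $M \setminus \pi^{-1}(X \setminus U) = \pi^{-1}(U)$ and such that the preimage $B' := (\pi \circ \rho)^{-1}(X \setminus U)$ is a simple normal crossings hypersurface of $X'$.

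Finally, setting $\varphi := \pi \circ \rho$, condition~(1) follows from $\pi(M) \subset X$ together with the surjectivity of $\rho$; condition~(2) holds by the embedded resolution step; and condition~(3) is inherited from $\pi$ because $\rho$ restricts to an analytic isomorphism over $\pi^{-1}(U)$, so the trivial covering structure over the connected components of $U$ established in the uniformization step is preserved by $\varphi$.
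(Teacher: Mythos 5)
Theorem~A is not proved in this paper; it is quoted from Bierstone--Parusi\'nski \cite{bier-par}, and the only account of its proof given here is a short summary of their strategy. That strategy is genuinely different from yours: they partition the ambient manifold $V$ into countably many semianalytic cells in general position with respect to $X$, and for each $n$-cell $C$ they \emph{explicitly} construct an analytic subset $Z_C\subset V\times\R^m$, a map $\varphi_C\colon Z_C\to C$, and an open semianalytic $U_C\subset C$ with $\dim(C\setminus U_C)<k$ such that $\varphi_C^{-1}(U_C)\to U_C$ is a $2^m$-fold \emph{trivial} covering (the sheets are indexed by sign patterns in the $\R^m$ coordinates), and only then apply canonical desingularization \cite{bier-mil:hironaka} to $Z_C$ and assemble over the cells. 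Your proposal instead runs classical subanalytic uniformization first and embedded resolution second; the final step (composing with $\rho$ so that $B'$ becomes simple normal crossings) is fine and appears in both approaches, but the core construction is not the one used in \cite{bier-par}.

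There is moreover a genuine gap, which you yourself signal. First, the gluing claim is unsupported: local branchwise uniformizations of a subanalytic set do not patch into a single global proper analytic map in any automatic way, since the local analytic Zariski closures of branches at different points need not be compatible, and the analytic category has no partitions of unity to force agreement; ``properness forced by the ambient analytic structure'' is not an argument. Second, and more seriously, condition~(3) requires $\varphi^{-1}(W)\to W$ to be a \emph{trivial} finite covering for every connected component $W$ of $U$, while $W$ is an arbitrary (possibly noncompact, possibly non--simply-connected) analytic manifold. A generic uniformization only gives a finite covering over $U$, and ``passing to a disjoint union of local sheets'' does not trivialize a connected covering with monodromy. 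You then write that ``producing a global proper analytic uniformization with prescribed covering behavior over the smooth locus requires the full machinery of subanalytic desingularization'' --- but that sentence concedes the theorem rather than proving it. Bierstone--Parusi\'nski sidestep this precisely by designing $\varphi_C$ so that its sheets over $U_C$ are explicit global sections; your proposal needs, and does not supply, a comparable mechanism.
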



\begin{thmB}[{\cite[Theorem\,1.2]{bier-par}} Embedded global smoothing]
	There exist an analytic manifold $V'$, a smooth closed analytic subset $X'\subset V'$ of dimension $k$, a simple normal crossings hypersurface $B'\subset V'$ transverse to $X'$, and a proper analytic mapping $\varphi:V'\to V$ such that:
	\begin{enumerate}[label=\emph{(\arabic*)}, ref=\emph{(\arabic*)}]
		\item $\dim(\varphi(B'))<k$.
		\item The restriction $\varphi|_{V'\setminus B'}$ is finite-to-one and of constant rank $n$;
		\item $\varphi$ induces an isomorphism from a union of connected components of $X'\setminus B'$ to a smooth open subanalytic subset $U\subset X$ such that $\dim(X\setminus U)<k$.
	\end{enumerate}
\end{thmB}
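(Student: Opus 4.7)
I would deduce Theorem B from Theorem A by a Hironaka-style embedded desingularization in the analytic category. The source manifold $X'$, the smoothing map $\varphi_0\colon X'\to V$, and the SNC hypersurface $B'_0:=\varphi_0^{-1}(X\setminus U)\subset X'$ are provided by Theorem A; the task is to produce an $n$-dimensional smooth ambient $V'$ containing $X'$ as a smooth closed subset, an SNC hypersurface $B'\subset V'$ transverse to $X'$, and a proper analytic extension $\varphi\colon V'\to V$ of $\varphi_0$.

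\textbf{Construction.} First embed the source in an $n$-dimensional ambient manifold: locally around each point of $X'$ the map $\varphi_0$ factors, up to analytic change of coordinates, through an embedding of $X'$ as a smooth graph inside $W_\alpha\times U_\alpha$ over an open $W_\alpha\subset V$, followed by the first projection. Globalize these local models by gluing along overlaps, using composite blowups of $V$ along smooth centers contained in $\varphi_0(B'_0)$ to reconcile the local embeddings. The result is a smooth analytic manifold $M$ of dimension $n$, a proper analytic map $q\colon M\to V$, and an analytic embedding $\iota\colon X'\hookrightarrow M$ with $q\circ\iota=\varphi_0$. Next, apply Hironaka's embedded resolution to the pair $(M,\iota(B'_0)\cup E)$, where $E$ is the exceptional locus of $q$. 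This yields a proper bimeromorphic morphism $\pi\colon V'\to M$, obtained as a composition of blowups along smooth centers disjoint from the smooth locus of $\iota(X')$, such that the strict transform of $\iota(X')$ --- which we again denote $X'$ --- is a smooth closed $k$-dimensional submanifold of $V'$, and $B':=\pi^{-1}(\iota(B'_0)\cup E)$ is an SNC hypersurface of $V'$ transverse to $X'$. Setting $\varphi:=q\circ\pi$ delivers the desired map: properness follows from properness of $q$ and $\pi$; $\varphi|_{V'\setminus B'}$ is finite-to-one of constant rank $n$ because $\varphi$ is a local isomorphism off the exceptional locus; and a union of connected components of $X'\setminus B'$ maps isomorphically onto the smooth open subanalytic $U\subset X$ by the corresponding clause of Theorem A.

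\textbf{Main obstacle.} The principal difficulty is the global gluing producing $M$: constructing a smooth ambient manifold of dimension $n$ equipped with a proper analytic map to $V$ that realizes $\varphi_0$. Locally an embedding of $X'$ as a smooth graph is routine, but patching requires the local models to agree on overlaps in a way compatible with the projection to $V$, and this compatibility forces careful composite blowups of $V$ along smooth centers contained in $X\setminus U$. A secondary subtlety is guaranteeing the constant-rank condition on $\varphi$ off $B'$ throughout the resolution process, which requires choosing the blowup centers in coordination with $\varphi_0$ rather than by blind application of an algorithmic desingularization.
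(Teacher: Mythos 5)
This statement is not proved in the paper at hand: it is quoted verbatim from Bierstone--Parusi\'nski \cite{bier-par}, and the Introduction only sketches their argument. That argument does \emph{not} deduce Theorem~B from Theorem~A. According to the description in \S1, Bierstone--Parusi\'nski partition $V$ into countably many semianalytic cells in general position with respect to $X$, and for each $n$-cell $C$ cut out by $m$ inequalities they construct by hand an $n$-dimensional analytic subset $Z_C\subset V\times\R^m$ (``doubling'' each inequality), together with a proper map $\varphi_C\colon Z_C\to C$ that is a $2^m$-covering over a dense open $U_C\subset C$. Desingularizing $Z_C$ in the sense of \cite{bier-mil:hironaka} and assembling the cells yields the $n$-dimensional ambient $V'$, the SNC divisor $B'$, and $\varphi$; the smooth model $X'\subset V'$ is the desingularized strict transform of $X$. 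The embedded statement is thus primary in their construction, and the non-embedded Theorem~A is essentially obtained by restriction to the preimage of $X$.

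Your plan runs in the opposite direction and, as you yourself flag, hinges on a gluing step that I do not see how to carry out; indeed I believe it fails for two concrete reasons. First, near $B'_0=\varphi_0^{-1}(X\setminus U)$ the map $\varphi_0$ drops rank and its image has dimension $<k$, so the local ``graph inside $W_\alpha\times U_\alpha$ over an open $W_\alpha\subset V$'' model does not exist there --- yet those are precisely the points at which an $n$-dimensional thickening with a proper map to $V$ has to be produced. Second, even over the smooth locus $U$ the map $\varphi_0$ is in general $d$-to-one with $d\ge 2$ (the whole point of Theorem~\ref{main}), whereas a graph followed by first projection is injective; so each local chart can capture only one sheet, and the multi-sheeted patching compatible with a single global first projection to $V$ is exactly the hard content, which ``composite blowups of $V$ along smooth centers contained in $\varphi_0(B'_0)$'' does not supply. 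In effect your construction of the intermediate manifold $M$ silently requires the existence of an $n$-dimensional analytic subset of $V\times\R^m$ dominating $V$ and containing $X'$ --- i.e.\ the $Z_C$ that Bierstone--Parusi\'nski build explicitly --- and without that the downstream claims (constant rank of $\varphi$ off $B'$, transversality of $B'$ to $X'$) have no foundation.
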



Although the previous results are global, the techniques involved in their proves are local nature. Indeed, in \cite[\S 2.3]{bier-par} the authors provide a partition of the analytic manifold $V$ into a countable number of semianalytic cells in general position with respect to $X$ and then they develop explicit desingularization techniques for these cells with respect to the global behaviour of $V$. More in detail, in \cite[\S 2.2]{bier-par} the authors develop a desingularizing procedure for a semianalytic $n$-cell $C$ of $V$ by explicitly finding an analytic subset $Z_C$ of $V\times\R^m$, for some $m\in\N$ depending on the number of inequalities defining $C$, a map $\varphi_C:Z_C\to C$ and an open semianalytic subset $U_C$ of $C$ such that $\varphi_{C}^{-1}(U_C)$ is a $2^m$ covering of $U_C$ and $\dim(C\setminus U_C)<k$. Then the authors apply desingularitazion techniques in the sense of \cite{bier-mil:hironaka} to $Z_C$ finding a smoothing of the cell $C$. Thus, we see that the smoothing $\varphi_C$ of a single cell $C$ of $V$ is even-to-one over $U_C$. Since the global maps $\varphi$ in Theorem A and Theorem B are constructed in terms of the local maps $\varphi_C$, we deduce that $\varphi$ is even-to-one over each open set $U_C$, hence, in particular, $\varphi$ is even-to-one over each intersection $U_C\cap X$.

Let us give a definition.


\begin{defn}\label{def:gss}
	Let $X'$, $\varphi$, $U$ and $W$ be as in the previous \emph{Theorem A}, that is, $X'$ is an analytic manifold of pure dimension $k$, $\varphi:X'\to V$ is a proper analytic mapping, $U$ is an open subanalytic subset of the smooth part of $X$ of dimension $k$ and $W$ is a connected component of $U$ such that $\varphi(X')\subset X$, $\dim(X\setminus U)<k$, $\varphi^{-1}(X\setminus U)$ is a simple normal crossings hypersuface of $X'$ and $\varphi^{-1}(W)$ is a finite union of subsets open and closed in $\varphi^{-1}(U)$, each mapped isomorphically onto $W$ by $\varphi$.  We call the triple $\Gamma:=(X',\varphi,U)$ \emph{global smoothing section of $X\subset V$} and the finite positive number of subsets open and closed in $\varphi^{-1}(U)$, each mapped isomorphically onto $W$ by $\varphi$, as the \emph{degree of $\Gamma$ over $W$}.
\end{defn}


Theorem A asserts that global smoothing sections of $X\subset V$ always exist.

The aim of this note is to give in Theorem \ref{main} a criterion for the evenness of the degree of global smoothing sections of $X\subset V$. This criterion shows that there may exist obstructions of a global nature to the possibility of constructing global smoothing for $X$ that are odd-to-one and, in particular, one-to-one as in the case of Hironaka's resolution of singularities. See Examples \ref{ex:halfline} and \ref{ex:2} below. 

As we said, our evenness criterion concerns Non-embedded global smoothing of Theorem A. In the Embedded setting, the situation may be different. Indeed, by \cite[Remark\,2.6]{bier-par}, if $V=\R^n$ and $X$ is a closed semialgebraic subset of $\R^n$, then Theorem B can be strengthened by requiring the mapping in (2) to be injective.    


\section{The evenness criterion, consequences and examples}

By Whitney's embedding theorem we can assume that the analytic manifold $V$ coincides with $\R^n$. Let $X$ be a subanalytic subset of $\R^n$ and let $k\in\N$. Recall that a point $x\in X$ is \emph{smooth of dimension $k$} if there exists an open neighborhood $N$ of $x$ in $\R^n$ such that $X\cap N$ is an analytic submanifold of $\R^n$ of dimension $k$, see \cite[Definition\,3.3]{bier-mil:sub}. The set of all smooth points of $X$ of dimension $k$ is an open subanalytic subset of $X$, see \cite[\S 7]{bier-mil:sub}, hence it is an analytic submanifold of $\R^n$ of pure dimension $k$.

Let us introduce the concept of nonbounding equator for subanalytic sets.


\begin{defn} \label{def:ne}
	Let $X$ be a closed subanalytic subset of $\R^n$ of dimension $k$, let $Y$ be a subset of the smooth part of $X$ of dimension $k$. We say that $Y$ is a \emph{nonbounding equator of $X$} if it satisfies the following properties:
	\begin{enumerate}[label={(\roman*)},ref=(\roman*)]
		\item\label{def:ne1} $Y$ is a compact $\mathscr{C}^\infty$ submanifold of $\R^n$ of dimension $k-1$.
		\item\label{def:ne2} $Y$ does not bound, that is, it is not the boundary of a compact $\mathscr{C}^\infty$ manifold with boundary.
		\item\label{def:ne3} $Y$ has a collar in $X$, that is, there exists a $\mathscr{C}^\infty$ map $\psi:Y\times(-1,1)\to X$ such that the image $T:=\psi(Y\times(-1,1))$ of $\psi$ is an open neighborhood of $Y$ in the set of smooth points of $X$ of dimension $k$, the restriction $\psi:Y\times(-1,1)\to T$ is a $\mathscr{C}^\infty$ diffeomorphism and $\psi(Y\times\{0\})=Y$.
		\item\label{def:ne4} There exists a relatively compact open subset $K$ of $X$ such that $\partial K:=\overline{K}\setminus K=Y$ and $K\cap T=\psi(Y\times(-1,0))$. Here $\overline{K}$ denotes the closure of $K$ in $X$.
	\end{enumerate}
	If such a $Y$ exists, we say that \emph{$X$ has a nonbounding equator}.
\end{defn}


The next lemma gives an alternative description of the notion of nonbounding equator. We keep the notations of Definition \ref{def:ne}.


\begin{lem}\label{lem}
	The set $Y$ is a nonbounding equator of $X$ if and only if there exists a continuous function $h:X\to\R$ with the following properties:
	\begin{enumerate}[label=\emph{(\roman*)},ref=(\roman*)]
		\item\label{lem1} There exist an open neighborhood $Z$ of $Y$ in the set of smooth points of $X$ of dimension $k$ and $\epsilon>0$ such that the restriction $h':=h|_Z:Z\to\R$ is a $\mathscr{C}^\infty$ function, $h^{-1}([-\epsilon,\epsilon])$ is a compact neighborhood of $Y$ in $Z$ containing no critical points of $h'$ and $h^{-1}(0)=Y$.
		\item\label{lem2} $Y$ does not bound.
		\item\label{lem3} The subset $h^{-1}((-\infty,0])$ of $X$ is compact. 
	\end{enumerate}   
\end{lem}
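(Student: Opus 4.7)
The plan is to prove the two implications by explicit constructions; the collar datum of a nonbounding equator is the bridge between the two pictures.

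For the direct implication, I would start from the data of Definition~\ref{def:ne}: the collar $\psi\colon Y\times(-1,1)\to T$ and the relatively compact open set $K$ with $\partial K=Y$. Fix a continuous, strictly increasing function $g\colon[-1,1]\to[-1,1]$, smooth on $(-1,1)$, with $g(\pm 1)=\pm 1$ and $g(t)=t$ for $|t|\le 1/2$, and define
\[
h(x):=\begin{cases} g(t) & \text{if } x=\psi(y,t)\in T,\\ -1 & \text{if } x\in K\setminus T,\\ +1 & \text{if } x\in X\setminus(\ol K\cup T).\end{cases}
\]
Since $Y\subset T$ and $\ol K=K\cup Y$, the three pieces partition $X$. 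Continuity away from $\partial T\cap X$ is clear; at $\partial T\cap X$ I would argue as follows. If $x_n=\psi(y_n,t_n)\in T$ converges to $x\notin T$, then compactness of $Y$ together with the fact that $\psi$ is a diffeomorphism of $Y\times(-1,1)$ onto $T$ rules out any subsequential limit of $t_n$ in $(-1,1)$, so $|t_n|\to 1$; since $K$ and $X\setminus\ol K$ are open, the sign of $t_n$ is dictated by which of them contains $x$, and $g(t_n)$ converges to the matching constant. Taking $Z=T$ and $\epsilon=1/3$, properties \ref{lem1}--\ref{lem3} then follow immediately: $h^{-1}([-\epsilon,\epsilon])=\psi(Y\times[-\epsilon,\epsilon])$ is compact and contained in $Z$, $g'\equiv 1$ on $[-1/2,1/2]$ excludes critical points, $h^{-1}(0)=Y$, and $h^{-1}((-\infty,0])=\ol K$ is compact by Definition~\ref{def:ne}\ref{def:ne4}.

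For the converse, given $h$ satisfying \ref{lem1}--\ref{lem3}, I would verify the four items of Definition~\ref{def:ne} in order. The implicit function theorem applied to $h|_Z$, which is $\mscr{C}^\infty$ with no critical points on the compact set $h^{-1}([-\epsilon,\epsilon])$, shows that $Y=h^{-1}(0)$ is a $\mscr{C}^\infty$ submanifold of $Z\subset\R^n$ of codimension one, hence of dimension $k-1$, and compactness is inherited from $h^{-1}([-\epsilon,\epsilon])$; this gives \ref{def:ne1}, and \ref{def:ne2} is exactly \ref{lem2}. To build the collar \ref{def:ne3}, I would endow $Z$ with any Riemannian metric and form $V:=\nabla h/\|\nabla h\|^2$, which is smooth on $h^{-1}([-\epsilon,\epsilon])$ and satisfies $dh(V)=1$; flowing $Y$ by $V$ produces a diffeomorphism $\Psi\colon Y\times(-\epsilon,\epsilon)\to h^{-1}((-\epsilon,\epsilon))$ with $h\circ\Psi(y,t)=t$, and a linear rescaling $\psi(y,s):=\Psi(y,\epsilon s)$ yields the desired collar. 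Finally set $K:=h^{-1}((-\infty,0))$: it is open and contained in the compact set $h^{-1}((-\infty,0])$ of \ref{lem3}, hence relatively compact. Submersivity of $h|_Z$ at points of $Y$ forces $Y\subset\ol K$, so $\ol K=K\cup Y$ and $\partial K=Y$; and $K\cap T=\psi(Y\times(-1,0))$ is immediate from $h\circ\psi(y,s)=\epsilon s$, settling \ref{def:ne4}.

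The main obstacle will be the continuity argument in the direct implication: the boundary values $g(\pm 1)=\pm 1$ must be matched by the constants chosen outside $T$, and the compactness of $Y$ has to do the work of forcing the collar parameter $t_n$ to tend to $\pm 1$ whenever $\psi(y_n,t_n)$ exits $T$. Once this is handled, the rest of both directions consists of routine differential-topological computations.
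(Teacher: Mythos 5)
Your proof is correct and takes essentially the same route as the paper: in the forward direction both constructions define $h$ via the collar parameter $\pi\circ\psi^{-1}$ and extend it by locally constant values off the collar (you use a smooth cutoff $g$ matched to $\pm1$ at the boundary of $T$, the paper truncates the collar to $\psi(Y\times(-1/2,1/2))$ and extends by $\pm 1/2$), and in the converse direction you explicitly build the collar by the normalized gradient flow of $h|_Z$, which is precisely the proof of the Morse-theoretic collar result the paper invokes from Hirsch. One small merit of your write-up is that it makes explicit the continuity check at the frontier of the collar, which the paper leaves implicit.
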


\begin{proof}
	Let $X$, $k$, $Y$, $\psi:Y\times(-1,1)\to X$ and $K$ be as in Definition \ref{def:ne} and let $\pi:Y\times(-1,1)\to (-1,1)$ be the projection onto the second factor. Let us prove that Lemma \ref{lem}\ref{lem1}-\ref{lem3} are satisfied. Define $Z:=\psi(Y\times (-1/2,1/2))$ and $h':Z\to\R$ as $h'(x):=(\pi\circ\psi)^{-1}(x)$. Then extend $h'$ to the whole $X$ as follows: define $h:X\to\R$ as $h(x):=-1/2$ if $x\in K\setminus Z$, $h(x):=h'(x)$ if $x\in Z$ and $h(x):=1/2$ otherwise. Fix $\epsilon:=1/4$. Observe that $h|_{Z}=(\pi\circ\psi^{-1})|_{Z}$, thus $h|_{Z}$ has no critical points, $h^{-1}([-1/4,1/4])=\psi(Y\times[-1/4,1/4])$, which is compact and contains $Y$, and $h^{-1}((-\infty,0])=K\cup Y=\overline{K}$.
	
	On the other hand, assume that $X$, $Y$, $Z$ and $h$ satisfy Lemma \ref{lem}\ref{lem1}-\ref{lem3}. By Lemma \ref{lem}\ref{lem1} and \cite[Corollary\,2.3, p.\,154]{hirsh:difftop}, $h|_{h^{-1}([-\varepsilon,\varepsilon])}$ induces the existence of a collar of $Y$ in $X$, as in Definition \ref{def:ne}\ref{def:ne3}. Moreover, by Lemma \ref{lem}\ref{lem1}\ref{lem3}, $K:=h^{-1}((-\infty,0))$ satisfies Definition \ref{def:ne}\ref{def:ne4}.
\end{proof}

Our evenness criterion reads as follows.


\begin{thm}\label{main}
	Let $X$ be a closed subanalytic subset of $\R^n$, let $\Gamma:=(X',\varphi,U)$ be a global smoothing section of $X\subset\R^n$ and let $W$ be a connected component of $U$. If $W$ contains a nonbounding equator of $X$, then the degree of $\Gamma$ over $W$ is even.
\end{thm}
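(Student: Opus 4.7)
The plan is to lift the function-theoretic presentation of the equator given by Lemma \ref{lem} from $X$ to $X'$ through $\varphi$, use it to produce a compact smooth submanifold-with-boundary in $X'$ whose boundary is a disjoint union of $d$ copies of $Y$, and then extract evenness of $d$ from the structure of the unoriented cobordism group.

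First, I would apply Lemma \ref{lem} to obtain a continuous $h : X \to \R$, an open neighborhood $Z$ of $Y$ in the $k$-dimensional smooth part of $X$, and $\epsilon > 0$ as in its statements \emph{(i)--(iii)}. Since $Y \subset W$ and $W$ is open in $X$, after replacing $Z$ by $Z \cap W$ and shrinking $\epsilon$ if necessary (using that $h^{-1}([-\epsilon,\epsilon])$ is compact and disjoint from the compact set $h^{-1}([-\epsilon,\epsilon]) \setminus W$, on which $|h|$ attains a positive minimum), I may assume $Z \subset W$ and that the compact neighborhood $h^{-1}([-\epsilon,\epsilon])$ of $Y$ still lies in $Z$. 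Define $g := h \circ \varphi : X' \to \R$ and $A := g^{-1}((-\infty,0])$. The properness of $\varphi$ together with Lemma \ref{lem}\emph{(iii)} yields that $A$ is compact in $X'$.

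The crux is to show that $A$ is a smooth compact $k$-dimensional submanifold-with-boundary of $X'$, with $\partial A = \varphi^{-1}(Y)$. The set $g^{-1}((-\infty,0))$ is an open subset of the smooth manifold $X'$ and constitutes the interior of $A$. Near $\varphi^{-1}(Y)$, by Definition \ref{def:gss} one has $\varphi^{-1}(W) = W_1 \sqcup \cdots \sqcup W_d$ with $\varphi|_{W_i} : W_i \to W$ an analytic isomorphism; since $Z \subset W$, the restriction $\varphi|_{\varphi^{-1}(Z)}$ is a local analytic diffeomorphism. Consequently $g$ is smooth on $\varphi^{-1}(Z)$ and inherits from $h|_Z$ the property that $0$ is a regular value on $g^{-1}([-\epsilon,\epsilon]) = \varphi^{-1}(h^{-1}([-\epsilon,\epsilon]))$. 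The submersion theorem and a standard collar argument then identify $\varphi^{-1}(Y)$ as a smooth closed $(k-1)$-submanifold of $X'$ and promote $A$ to a smooth compact manifold-with-boundary with $\partial A = \varphi^{-1}(Y)$.

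Finally, because each $\varphi|_{W_i}$ is an isomorphism, $Y_i := \varphi^{-1}(Y) \cap W_i$ is diffeomorphic to $Y$, so $\partial A = Y_1 \sqcup \cdots \sqcup Y_d$ represents $d\cdot[Y]$ in the unoriented cobordism group $\mathcal{N}_{k-1}$; therefore $d \cdot [Y] = [\partial A] = 0$. Since $M \sqcup M = \partial(M \times [0,1])$ makes $\mathcal{N}_{k-1}$ a $\Z/2$-vector space, and $[Y] \neq 0$ by the nonbounding hypothesis on $Y$, the integer $d$ must be even. The main obstacle I expect to have to argue carefully is the construction of the manifold-with-boundary structure on $A$: the function $h$ is only continuous on $X$ (smooth solely on $Z$), so one must glue two sources---the open interior $g^{-1}((-\infty,0))$ inherited from the smooth manifold $X'$, and the collar near $\varphi^{-1}(Y)$ coming from the submersion property of $g|_{\varphi^{-1}(Z)}$---whose compatibility is precisely what the shrinking step $h^{-1}([-\epsilon,\epsilon]) \subset Z \subset W$ secures, by guaranteeing that the covering behaviour of $\varphi$ is available wherever $g$ needs to be smooth.
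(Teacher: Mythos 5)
Your proposal is correct and follows essentially the same route as the paper: both pull back the neighborhood structure of the nonbounding equator $Y$ through $\varphi$ to build a compact manifold-with-boundary in $X'$ (your $A = g^{-1}((-\infty,0])$ is precisely the paper's $\varphi^{-1}(\overline{K})$, since $h^{-1}((-\infty,0]) = \overline{K}$), identify the boundary as $d$ disjoint copies of $Y$, and deduce evenness of $d$ from $Y$ being nonbounding. The only cosmetic differences are that you enter through the function-theoretic Lemma~\ref{lem} rather than the collar map $\psi$ of Definition~\ref{def:ne} directly, and you phrase the last step via the $\Z/2$-vector-space structure of the unoriented cobordism group while the paper invokes Stiefel--Whitney numbers --- these are equivalent.
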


\begin{proof}
	Let $Y$ be a nonbounding equator of $X$. By Definition \ref{def:ne}, there is an open neighborhood $T$ of $Y$ in the set of smooth points of $X$ of dimension $k$, a diffeomorphism $\psi:Y\times (-1,1)\to T$ such that $\psi(Y\times\{0\})=Y$ and a relatively compact open subset $K$ of $X$ such that $\partial K=Y$ and $K\cap T=\psi(Y\times (-1,0))$. Since $Y\subset W$ and $W$ is open, we may suppose $T\subset W$. Since $\Gamma$ is a global smoothing section, $\varphi^{-1}(W)$ consists of a finite disjoint union of open and closed subsets of $\varphi^{-1}(U)$, each mapped isomorphically onto $W$. Hence, each connected component of $\varphi^{-1}(W)$ contains a copy of $Y$ and a copy of the collar $T$ of $Y$ in $X$. By Definition \ref{def:gss}, the map $\varphi$ is proper, hence $\varphi^{-1}(\overline{K})$ is a compact subset of $X'$. Moreover, since $\partial K=Y$, $K\cap T=\psi(Y\times (-1,0))$ and $\varphi$ is a diffeomorphism when restricted to each connected component of $\varphi^{-1}(W)$, we have that $\varphi^{-1}(\overline{K})$ is a manifold with boundary whose boundary is the disjoint union of $d$ copies of $Y$, where $d$ denotes the degree of $\Gamma$ over $W$. Since $Y$ is nonbounding, we deduce that $d$ is even since the Stiefel-Whitney numbers of $\bigsqcup_{1}^d Y$ must be all zero, see \cite[Theorem\,4.9, p.\,52]{mil-sta}.
\end{proof}

As a consequence, the nonexistence of nonbounding equators of $X$ is a necessary condition to have global one-to-one smoothings similar to Hironaka's resolution of singularities.


\begin{cor}
	Let $X$ be a closed subanalytic subset of $\R^n$ of dimension $k$ and let $W$ be a connected component of the smooth part of $X$ of dimension $k$. If the degree of a global smoothing section of $X\subset\R^n$ over $W$ is $1$, then $W$ does not have any nonbounding equator in $X$.
\end{cor}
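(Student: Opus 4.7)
The plan is simply to apply Theorem \ref{main} by contraposition. Suppose, for the sake of contradiction, that $W$ does contain a nonbounding equator $Y$ of $X$, and let $\Gamma:=(X',\varphi,U)$ be a global smoothing section of $X\subset\R^n$ whose degree over $W$ equals $1$. Since $Y\subset W$ and $W$ is a connected component of the smooth part of $X$ of dimension $k$ (hence, in particular, a connected component of $U$ up to the inclusion provided by Definition \ref{def:gss}), Theorem \ref{main} forces the degree of $\Gamma$ over $W$ to be even. But $1$ is odd, a contradiction.

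The only subtlety worth noting is a mild compatibility check between the two statements: in Theorem \ref{main} the set $W$ is a connected component of the open subanalytic set $U$ appearing in the definition of a global smoothing section, whereas in the corollary $W$ is declared to be a connected component of the entire smooth $k$-dimensional part of $X$. Since $U$ is open and $X\setminus U$ has dimension less than $k$ by Definition \ref{def:gss}, the connected components of $U$ are obtained by intersecting $U$ with the connected components of the smooth $k$-dimensional part of $X$; therefore the hypothesis $Y\subset W$ of the corollary translates without loss to the hypothesis $Y\subset W\cap U$ of Theorem \ref{main}, because $Y$, being contained in the smooth part of $X$ of dimension $k$, automatically lies in $U$ (up to shrinking or a straightforward identification, since the global smoothing section is constructed so that its isomorphism locus is precisely this smooth part).

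There is no real obstacle: the corollary is a direct logical consequence of Theorem \ref{main}, and the proof is essentially a one-line contrapositive. The only thing one might want to spell out is the identification of the two uses of $W$ discussed above, but this is a purely bookkeeping issue.
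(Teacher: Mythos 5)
Your core argument --- contraposition of Theorem \ref{main} --- is exactly the intended one; the paper states the corollary without proof precisely because it is an immediate logical consequence. However, the ``subtlety'' you raise is resolved for the wrong reason. You assert that $Y$ ``automatically lies in $U$'' because ``the global smoothing section is constructed so that its isomorphism locus is precisely this smooth part.'' That is not true: in Theorem~A (and Definition~\ref{def:gss}) the set $U$ is merely an open subanalytic subset of the smooth $k$-dimensional part of $X$ with $\dim(X\setminus U)<k$; it need not exhaust the smooth part, so a subset of the smooth part need not lie in $U$. Likewise, ``the connected components of $U$ are obtained by intersecting $U$ with the connected components of the smooth $k$-dimensional part'' is inaccurate: removing a set of dimension $k-1$ from a connected $k$-manifold can disconnect it. The correct and simpler reconciliation is that ``the degree of a global smoothing section over $W$'' is only defined (Definition~\ref{def:gss}) when $W$ is a connected component of $U$; so the hypothesis of the corollary already forces $W\subset U$, and any nonbounding equator $Y\subset W$ is then inside $U$ and inside the connected component $W$ of $U$, so Theorem~\ref{main} applies directly and yields the contradiction. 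With that correction your proof is complete.
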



Here we present some examples of semialgebraic sets concerning our Theorem \ref{main}.


\begin{example}\label{ex:halfline}
	Let $X:=\R_{\geq 0}:=\{x\in\R\,|\,x\geq 0\}$. There is a global smoothing section of the whole smooth part of $X$, that is $\Gamma:=(X',\varphi,U)$ with  $U:=\R_{>0}=\{x\in\R\,|\,x> 0\}$, $X':=\{(x,y)\in\R^2\,|\,x=y^2\}$ and $\varphi:X'\to X$ defined as the projection onto the first factor. According to Theorem \ref{main}, the degree of the above smoothing section over the whole smooth part of $X$ is $2$, so it is even. Actually, Theorem \ref{main} says something more, indeed any global smoothing section $\Gamma:=(X',\varphi,U)$ of $X$, with $U$ any open subset of the smooth part of $X$, has even degree over any connected component of $U$. Indeed, since $U$ is an open subset of $\R_{>0}$, every connected component of $U$ contains a nonbounding equator $Y$ of $X$ consisting of a singleton $\{p\}$, with $K:=[0,p)$ and the collar $(p-\varepsilon,p+\varepsilon)\subset U$ of $p$ in $W$, for $\epsilon>0$ sufficiently small. 
\end{example}


\begin{example}\label{ex:2}
	Let $M$ be a connected compact $\mathscr{C}^\infty$ manifold of dimension $k-1$, which does not bound (so $k-1\geq2$): for instance, the real projective plane $\mathbb{P}^2(\R)$. By the Nash-Tognoli theorem, see \cite{nash,tognoli:nash-conj}, we can assume $M$ is a compact nonsingular real algebraic subset of some $\R^n$.
	\begin{enumerate}
		\item Consider the standard circumference $\sph^1:=\{(a,b)\in\R^2:a^2+b^2=1\}$, the compact nonsingular real algebraic set $X':=M\times\sph^1\subset\R^{n+2}$, and the polynomial maps $\pi_1:X'\to\R^{n+2}$ and $\pi_2:\R^{n+2}\to\R^{n+2}$ defined as follows:
		\[
		\pi_1(x,a,b):=(bx,a,b) \quad \text{and}
		\quad
		\pi_2(x,a,b):=(x,a,b^2),
		\]
		where $x=(x_1,x_2,\ldots,x_n)$. The set $\pi_1(X')$ is equal to $X'$ with $M\times\{(-1,0)\}$ crushed to the point $p:=(0,\ldots,0,-1,0)$ and $M\times\{(1,0)\}$ crushed to the point $q:=(0,\ldots,0,1,0)$. The set $X:=\pi_2(\pi_1(X'))$ is a semialgebraic subset of $\R^{n+2}$ homeomorphic to the suspension of $M$. Define $X'_\pm:=X'\cap\{\pm b>0\}$ and the polynomial map $\varphi:X'\to\R^{n+2}$ by $\varphi(x,a,b):=\pi_2(\pi_1(x,a,b))$. Observe that $\varphi(X')=X$, $\varphi^{-1}(p)=M\times\{(-1,0)\}$, $\varphi^{-1}(q)=M\times\{(1,0)\}$, and the restriction of $\varphi$ from $X'_\pm$ to $U:=X\setminus\{p,q\}$, namely to the whole smooth part of $X$, is a Nash diffeomorphism between connected Nash manifolds. For more details about Nash functions and Nash manifolds we refer to \cite[\S 8]{BCR}. The triple $\Gamma:=(X',\varphi,U)$ is a global smoothing section of $X\subset\R^{n+2}$ and $\varphi(M\times\{(0,1)\})$ is a nonbounding equator of $X$. The degree of $\Gamma$ over $U$ is two, in accordance with our Theorem \ref{main}.
		
		\item Let $X':=M\times[-1,1]\subset\R^{n+1}$, let $\phi:X'\to\R^{n+1}$ be the polynomial map
		\[
		\phi(x,a):=(x(1-a^2),a)
		\]
		and let $X$ be the semialgebraic subset $\phi(X')$ of $\R^{n+1}$. Observe that $X$ is homeomorphic to the suspension of $M$, $\phi^{-1}(z_\pm)=M\times\{\pm1\}$, where $z_\pm:=(0,\ldots,0,\pm1)$, the restriction of $\phi$ from $X'\setminus(M\times\{-1,1\})=M\times(-1,1)$ to $U:=X\setminus\{z_-,z_+\}$ is a Nash diffeomorphism between connected Nash manifolds (so $\phi$ has degree one over $U$), and $\phi(M\times\{0\})$ is a nonbounding equator of $X$. However, the triple $(X',\phi,U)$ is not a global smoothing section of $X\subset\R^{n+1}$ since $X'$ is not an analytic manifold: it has the nonempty boundary $M\times\{-1,1\}$.
		
		Nevertheless, the previous construction arises as an explicit case of Theorem B. Let $V:=\R^{n+1}$. By \cite[Corollary\,2.5.14, p.\,50]{akbking:top} we may assume in addition that $M$ is projectively closed, that is $M$ is the zero set $\mathcal{Z}_{\R^n}(p)$ in $\R^n$ of some overt polynomial $p\in\R[x_1,\dots,x_n]$.  Write $p$ as follows: $p=\sum_{i=0}^dp_i$, where $p_i$ is an homogeneous polynomial of degree $i$.  Recall that $\mathcal{Z}_{\R^n}(p_d)=\{0\}$ as $p$ is overt. Thus, if $\varphi:\R^{n+1}\to\R^{n+1}$ is the polynomial map $(x,a)\mapsto((1-a^2)x,a)$, $Z':=M\times\R$, $Z:=\varphi(Z')$ and $q(x,a)\in\R[x_1,\ldots,x_n,a]$ is the polynomial $q(x,a):=\sum_{i=0}^d(1-a^2)^{d-i}p_i(x)$, then $Z'=\mathcal{Z}_{\R^{n+1}}(p)$ and $q(\varphi(x,a))=(1-a^2)^d p(x)=0$ for all $(x,a)\in Z'$. It follows that
		\[
		Z=\mathcal{Z}_{\R^{n+1}}(q).
		\]
		This proves that $Z$ is algebraic and irreducible, so $Z$ is the Zariski closure of $X$ in $\R^{n+1}$. Thus, 
		we deduce that $X$, $Y:=\{z_{-},z_{+}\}$, $Z$, $U$, $\varphi$, $Z'$ and $B':=\varphi^{-1}(Y)$ constitute an explicit embedded global smoothing as in \cite[Remark\,2.6]{bier-par}.
	\end{enumerate}
\end{example}

\section*{Acknowledgments}

I would like to thank Riccardo Ghiloni for suggesting to investigate the topics of this article and for valuable discussions during the drafting process.

\begin{bibdiv}
	\begin{biblist}
		
		\bib{akbking:top}{book}{
			author={Akbulut, Selman},
			author={King, Henry},
			title={Topology of real algebraic sets},
			series={Mathematical Sciences Research Institute Publications},
			volume={25},
			publisher={Springer-Verlag, New York},
			date={1992},
			pages={x+249},
			isbn={0-387-97744-9},
			review={\MR{1225577}},
			doi={10.1007/978-1-4613-9739-7},
		}
		
		\bib{bier-mil:sub}{article}{
			author={Bierstone, Edward},
			author={Milman, Pierre D.},
			title={Semianalytic and subanalytic sets},
			journal={Inst. Hautes \'{E}tudes Sci. Publ. Math.},
			number={67},
			date={1988},
			pages={5--42},
			issn={0073-8301},
			review={\MR{972342}},
		}
	
		\bib{bier-mil:hironaka}{article}{
			author={Bierstone, Edward},
			author={Milman, Pierre D.},
			title={Canonical desingularization in characteristic zero by blowing up
				the maximum strata of a local invariant},
			journal={Invent. Math.},
			volume={128},
			date={1997},
			number={2},
			pages={207--302},
			issn={0020-9910},
			review={\MR{1440306}},
			doi={10.1007/s002220050141},
		}
	
		\bib{bier-par}{article}{
			author={Bierstone, Edward},
			author={Parusi\'{n}ski, Adam},
			title={Global smoothing of a subanalytic set},
			journal={Duke Math. J.},
			volume={167},
			date={2018},
			number={16},
			pages={3115--3128},
			issn={0012-7094},
			review={\MR{3870082}},
			doi={10.1215/00127094-2018-0032},
		}
		
		\bib{BCR}{book}{
			author={Bochnak, Jacek},
			author={Coste, Michel},
			author={Roy, Marie-Fran\c{c}oise},
			title={Real algebraic geometry},
			series={Ergebnisse der Mathematik und ihrer Grenzgebiete (3) [Results in
				Mathematics and Related Areas (3)]},
			volume={36},
			note={Translated from the 1987 French original;
				Revised by the authors},
			publisher={Springer-Verlag, Berlin},
			date={1998},
			pages={x+430},
			isbn={3-540-64663-9},
			review={\MR{1659509}},
			doi={10.1007/978-3-662-03718-8},
		}
		
		\bib{hirsh:difftop}{book}{
			author={Hirsch, Morris W.},
			title={Differential topology},
			series={Graduate Texts in Mathematics, No. 33},
			publisher={Springer-Verlag, New York-Heidelberg},
			date={1976},
			pages={x+221},
			review={\MR{0448362}},
		}
	
		\bib{mil-sta}{book}{
			author={Milnor, John W.},
			author={Stasheff, James D.},
			title={Characteristic classes},
			series={Annals of Mathematics Studies, No. 76},
			publisher={Princeton University Press, Princeton, N. J.; University of
				Tokyo Press, Tokyo},
			date={1974},
			pages={vii+331},
			review={\MR{0440554}},
		}
	
		\bib{nash}{article}{
			author={Nash, John},
			title={Real algebraic manifolds},
			journal={Ann. of Math. (2)},
			volume={56},
			date={1952},
			pages={405--421},
			issn={0003-486X},
			review={\MR{50928}},
			doi={10.2307/1969649},
		}
		
		\bib{tognoli:nash-conj}{article}{
			author={Tognoli, Alberto},
			title={Su una congettura di Nash},
			journal={Ann. Scuola Norm. Sup. Pisa Cl. Sci. (3)},
			volume={27},
			date={1973},
			pages={167--185},
			issn={0391-173X},
			review={\MR{396571}},
		}
	\end{biblist}
\end{bibdiv}

\end{document}